\documentclass[a4paper, 12pt]{amsart}



\newcommand{\myl}{\ell}

\usepackage{amsmath}
\usepackage{amssymb}
\usepackage[multiple]{footmisc}

\DeclareMathOperator{\ad}{ad}

\DeclareMathOperator{\grend}{GrEnd}
\DeclareMathOperator{\codim}{codim}
\DeclareMathOperator{\im}{Im}
\DeclareMathOperator{\End}{End}
\DeclareMathOperator{\frend}{\mathfrak{gl}}

\newcommand{\F}{\mathbb F}

\newcommand{\E}{\mathbb E}

\newcommand{\N}{\mathbb N}

\newcommand{\Set}[1]{\left\{ #1 \right\}}

\renewcommand{\phi}{\varphi}

\newtheorem{theorem}{Theorem}
\newtheorem{cor}[theorem]{Corollary}
\newtheorem{lemma}[theorem]{Lemma}
\newtheorem{prop}[theorem]{Proposition}

\theoremstyle{definition}

\newtheorem{example}[theorem]{Example}

\theoremstyle{remark}
\newtheorem{rem}[theorem]{Remark}

\begin{document}

\title[Thin subalgebras of Lie algebras of maximal class]
      {Thin subalgebras of\linebreak
        Lie algebras of maximal class}
      
      \author[Avitabile]{M.~Avitabile}
      
 \address[M.~Avitabile]{Dipartimento di Matematica e Applicazioni\\
 Universit\`a degli Studi di Milano - Bicocca\\
 via Cozzi, 55\\
 I-20125 Milano\\
 Italy}
 
 \email{marina.avitabile@unimib.it}

 \author[Caranti]{A.~Caranti}

 \address[A.~Caranti]{Dipartimento di Matematica\\
 Universit\`a degli Studi di Trento\\
 via Sommarive, 14\\
 I-38123 Trento\\
 Italy}

 \email{andrea.caranti@unitn.it}

 \author[Gavioli]{N.~Gavioli}

 \address[N.~Gavioli]{Dipartimento di Ingegneria e Scienze dell'Informazione e Matematica\\
 Universit\`a degli Studi dell'Aquila\\
 via Vetoio\\
 I-67100 Coppito (AQ)\\
 Italy}

 \email{norberto.gavioli@univaq.it}

 \author[Monti]{V.~Monti}
 
 \address[V.~Monti]{Dipartimento di Scienza e Alta Tecnologia\\
 Universit\`a degli Studi dell'Insubria\\
 Via Valleggio, 11\\
I-22100 Como\\
 Italy}

 \email{valerio.monti@uninsubria.it}

 \author[Newman]{M.~F.~Newman}

 \address[M.~F.~Newman]{Mathematical Sciences Institute\\
Australian National University \\
Canberra \\
ACT 2600 Australia}

 \email{mike.newman@anu.edu.au}

 \author[O'Brien]{E.~A.~O'Brien}

 \address[E.~A.~O'Brien]{Department of Mathematics\\
University of Auckland\\
Auckland \\
New Zealand}

 \email{e.obrien@auckland.ac.nz}

\renewcommand{\shortauthors}{M.~Avitabile et al}

\thanks{The first four authors are members of the Italian INdAM-GNSAGA, and
 thank the Department of Mathematics and its Applications of the
 University of Milano – Bicocca for the hospitality.
\endgraf
A.~Caranti is grateful to the Department of
Mathematics of the University of Trento for its support.
\endgraf
E.~A.~O'Brien has been supported by the Marsden Fund of New Zealand
grant UOA 1626}.

\begin{abstract}
  For every  field \(\F\)  which has a  quadratic extension  \(\E\) we
  show there  are non-metabelian infinite-dimensional thin  graded Lie
  algebras all of whose homogeneous components, except the second one,
  have   dimension   2.   We    construct   such   Lie   algebras   as
  \(\F\)-subalgebras  of  Lie algebras  \(M\)  of  maximal class  over
  \(\E\).  We  characterise the  thin Lie \(\F\)-subalgebras  of \(M\)
  generated  in degree  \(1\). Moreover  we show  that every  thin Lie
  algebra \(L\) whose  ring of graded endomorphisms of  degree zero of
  \(L^3\) is a  quadratic extension of \(\F\) can be  obtained in this
  way. We  also characterise  the 2-generator \(\F\)-subalgebras  of a
  Lie  algebra  of  maximal  class   over  \(\E\)  which  are  ideally
  \(r\)-constrained for a positive integer \(r\).
\end{abstract}

\maketitle

\thispagestyle{empty}

\section{Introduction}
A {\em thin} Lie algebra is a Lie algebra
\[
 L=\bigoplus_{i=1}^{\infty}L_{i},
\]
over  a field  \(\F\), graded  over the  positive integers,  such that
\(\dim_{\F}L_1=2\) and  the following \emph{covering  property} holds:
for  every  \(i\ge  1\)  and   for  every  nonzero  \(u  \in  L_{i}\),
\([u,L_{1}]=L_{i+1}\). Hence  each homogeneous component of  \(L\) has
dimension  at most~2.  If \(\dim  L_i=1\)  for every  \(i\ge 2\),  the
algebra \(L\) is referred to  as a {\em {\rm(}graded{\rm)} Lie algebra
  of maximal  class} (see Section~\ref{sec:prel} for  details). We use
the  convention that,  unless otherwise  specified, thin  algebras are
infinite-dimensional and  not of  maximal class. The  smallest integer
\(k\ge  2\) such  that \(\dim  L_k=2\) is  a parameter  that has  been
studied  in several  papers  (\cite{CMNS,AviJur,AJM}); in  particular,
\(k\) can be \(3\).

In  this paper  we  study the  class  of thin  algebras  all of  whose
homogeneous components,  except the second, have  dimension \(2\). The
only   known  examples   of   such  algebras   are  those   considered
in~\cite{GMY}.  There  it  is  proved that  all  metabelian  thin  Lie
algebras  belong   to  this   class,  and   they  are   in  one-to-one
correspondence with the  quadratic extensions of the  field \(\F\). In
particular,  in  one  of   the  constructions  in~\cite{GMY},  a  thin
metabelian Lie algebra is realised as  a subalgebra over \(\F\) of the
tensor  product  of  the unique  infinite-dimensional  metabelian  Lie
algebra of maximal class by a quadratic extension of \(\F\).

The main goal of this paper is to generalise the results of~\cite{GMY}
to the  non-metabelian case.  Let \(\E\) be  a quadratic  extension of
\(\F\) and let \(M\) be a Lie algebra of maximal class over \(\E\). In
Theorem~\ref{theo:main}  we consider  the \(\F\)-subalgebras  of \(M\)
generated by  two elements  of degree 1.  Amongst these  we explicitly
characterise those  which are  thin and  prove that  every homogeneous
component, except the  second one, has dimension \(2\).  We also prove
that,   for   every   such   thin   algebra   \(L\),   the   ring   of
\(L\)-endomorphisms  of  the   \(L\)-module  \(L^3\)  preserving  each
homogeneous component is isomorphic to \(\E\).

We consider more  generally the \(\F\)-subalgebras of  \(M\) which are
2-generated  in  degree 1  under  the  assumption  that \(\E\)  is  an
arbitrary extension of  \(\F\). In Proposition~\ref{prop:maximalclass}
we  characterise those  subalgebras which  are of  maximal class  over
\(\F\).   The   covering   property   for   a   graded   Lie   algebra
\(L=\bigoplus_{i=1}^\infty   L_{i}\)  can   be  restated   as  follows
(see~\cite{GM02}):  every nonzero  graded  ideal of  \(L\) is  located
between two consecutive terms of the  lower central series of \(L\). A
natural generalisation is to require  that there is a positive integer
\(r\) such that every graded nonzero ideal of \(L\) is located between
\(L^{i}\) and  \(L^{i+r}\) for some  \(i\). Lie algebras  generated by
\(L_{1}\)    satisfying     this    condition     are    \emph{ideally
  \(r\)-constrained}.   They   were  introduced   in~\cite{GM02}.   In
Proposition~\ref{prop:r-IC}  we characterise  the 2-generator  ideally
\(r\)-constrained    \(\F\)-subalgebras   of    \(M\)   in    positive
characteristic  under  the  assumption  that  \(\E\)  is  a  quadratic
extension of~\(\F\).

In    Section~\ref{sec:mxl_class}    we    consider    a    \emph{just
  infinite-dimensional}  algebra  \(T\)  over  \(\F\)  with  \(\dim_\F
T_i=2\) in each degree \(i\) different  from 2. We prove that the ring
\(\E\) of  \(T\)-endomorphisms of the \(T\)-module  \(T^3\) preserving
the homogeneous components is a field extension of \(\F\) of degree at
most 2.  When \(\E \neq  \F\), we construct  a Lie algebra  of maximal
class  over  \(\E\)  which  contains  \(T\)  as  an  \(\F\)-subalgebra
generated by two elements in degree~1.

Finally, we show  that if we apply this construction,  starting from a
thin \(\F\)-subalgebra of a Lie algebra  \(M\) of maximal class over a
quadratic  extension  \(\E\) of  \(\F\),  then  up to  isomorphism  we
recover \(M\). We deduce that  the number of thin Lie \(\F\)-algebras,
all  of  whose  homogeneous  components  of degree  at  least  3  have
dimension    2,   is    \(\lvert\F\vert^{\aleph_0}\),   the    maximum
possible. The  work of \cite{CaMa:thin,Young:thesis} provided  a lower
bound \(2^{\aleph_0}\)  for the  number of thin  Lie algebras  over an
arbitrary field of positive characteristic.

\section{Preliminaries}\label{sec:prel}

Throughout the  paper all  Lie algebras are  infinite-dimensional over
the underlying field, unless explicitly mentioned.

We briefly recall the definition and the basic notions of Lie algebras
of  maximal   class,  referring  the  reader   to~\cite{CMN,  CN}  for
details.  A  \emph{{\rm(}graded{\rm)}  Lie algebra  of  maximal  class
  {\rm(}generated in degree \(1\){\rm)}} is a Lie algebra
\[
 M=\bigoplus_{i=1}^{\infty}M_{i}
\]
over  a  field  \(\E\),  graded over  the  positive  integers,  having
\(\dim_{\E}  M_{1}=2\),   \(\dim_{\E}  M_{i}   =  1\)   otherwise  and
\([M_{i},M_{1}]=M_{i+1}\).  The last  condition is  equivalent to  the
requirement that \(M_1\) generates \(M\) as a Lie algebra. The name is
motivated  by the  observation  that the  quotient \(M/M^{j}\),  where
\(M^j=\bigoplus_{i\geq j}M_{i}\) is  the \(j\)-th Lie power,  is a Lie
algebra of (finite) dimension \(j\)  and nilpotency class \(j-1\), the
maximum possible value.

See  \cite{ShZe:narrow-Witt,  CV-L,  MR1968427} for  Lie  algebras  of
maximal class not generated in degree~\(1\).

The  {\em  \(2\)-step centralisers}  of  \(M\)  are the  1-dimensional
subspaces of \(M_{1}\) defined by
\begin{equation*}
 C_{i}=C_{M_{1}}(M_{i})=\{a \in M_{1} \mid [a,M_i]=0\},
\end{equation*}
for  \(i\geq  2\).  Note  that  \(C_{i}=C_{M_{1}}(u_{i})\)  for  every
\(0\neq u_{i} \in M_{i}\). Homogeneous generators, \(x\) and \(y\), of
\(M\) are chosen  as follows. Let \(y\in M_{1}\)  such that \(C_{2}=\E
y\). If all  other \(C_i\) coincide with \(C_{2}\), then  \(M\) is the
unique  metabelian Lie  algebra of  maximal  class (this  is the  only
possibility if the  underlying field has characteristic  zero), and we
choose   \(x\)  in   \(M_{1}\)   such  that   \(x\)   and  \(y\)   are
\(\E\)-linearly  independent.  Otherwise,  if \(i\)  is  the  smallest
integer such that \(C_{i}\neq C_{2}\),  then we choose \(x\) such that
\(C_{i}=\E x\). Such \(x\) and  \(y\) are {\em standard generators} of
\(M\); each is defined up to a nonzero constant in \(\E\).

The  sequence  of  centralisers   of  \(M\)  consists  of  consecutive
occurrences of \(\E  y\) interrupted by isolated  occurrences of other
centralisers. If  \(C\) is a  \(2\)-step centraliser and \(m\)  is the
smallest  integer  such that  \(C_{m}=C\),  then  \(m=2p^n\) for  some
non-negative  integer \(n\).  If  \(C_i\) and  \(C_j\) are  successive
occurrences  of \(C\),  then \(j-i  \le  m\) (see  \cite[Secs.\ 3  and
  10]{CN} and \cite[p.\ 439]{Jurman05}). Furthermore, as a consequence
of        the        \emph{specialisation       technique}        (see
\cite[Proposition~4.1]{CN}),    every   2-step    centraliser   occurs
infinitely often.

\section{From maximal class to thin}\label{sec:r-IC}

Let \(\E\) be an arbitrary extension of  \(\F\) and let \(M\) be a Lie
algebra  of maximal  class over  \(\E\). We  start with  the following
result whose proof is immediate.
\begin{lemma}\label{lemma:Fisomorphism}
 Let \(i\geq 2\) and \(\myl\in M_{1}\setminus C_{i}\). The map
 \[
  \begin{aligned}
        \ad \myl \colon M_{i} &\rightarrow M_{i+1}\\
          v &\mapsto [v,\myl]
  \end{aligned}
  \]
defines an  \(\E\)-isomorphism; it  is also an  \(\F\)-isomorphism. If
\(V\)  is  an  \(\F\)-subspace  of  \(M_{i}\),  then  \(\dim_{\F}  \ad
\myl(V)=\dim_{\F}[V,\myl]=\dim_{\F}V\).
\end{lemma}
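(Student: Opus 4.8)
The plan is to exploit that each homogeneous component $M_i$ with $i \geq 2$ is one-dimensional over $\E$, so that $\ad \myl$ is a map between two one-dimensional $\E$-spaces and is an isomorphism as soon as it is nonzero.

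First I would record that $\ad \myl \colon M_i \to M_{i+1}$ is $\E$-linear, which is immediate from the $\E$-bilinearity of the Lie bracket: $[\alpha v + \beta w, \myl] = \alpha[v,\myl] + \beta[w,\myl]$ for $\alpha, \beta \in \E$ and $v, w \in M_i$. Next, fixing a nonzero $u_i \in M_i$, the identity $C_i = C_{M_1}(u_i)$ recalled in Section~\ref{sec:prel} shows that the hypothesis $\myl \notin C_i$ is equivalent to $[u_i, \myl] \neq 0$. Thus $\ad \myl$ is a nonzero $\E$-linear map between the one-dimensional $\E$-spaces $M_i$ and $M_{i+1}$, and a nonzero linear map between one-dimensional spaces is automatically bijective; hence $\ad \myl$ is an $\E$-isomorphism.

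For the $\F$-statement I would simply restrict scalars: since $\E$ is an extension of $\F$, the map $\ad \myl$ is in particular $\F$-linear, and being bijective it is an $\F$-isomorphism. For the final claim, given an $\F$-subspace $V \subseteq M_i$, the restriction $\ad \myl|_V$ is an injective $\F$-linear map onto its image $\ad \myl(V) = [V, \myl]$, so injectivity yields $\dim_\F[V, \myl] = \dim_\F V$.

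There is no serious obstacle here: the statement is essentially immediate once one observes the one-dimensionality of the components and translates $\myl \notin C_i$ into nonvanishing of the bracket. The only point requiring a moment's care is to confirm that $\ad \myl(V)$, understood as the set of brackets $[v, \myl]$ with $v \in V$, is already an $\F$-subspace and coincides with $[V, \myl]$, which follows at once from the $\F$-linearity of $\ad \myl$ established above.
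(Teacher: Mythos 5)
Your proof is correct and matches the argument the paper has in mind: the paper states the lemma "whose proof is immediate" and gives no details, and your reasoning (nonzero $\E$-linear map between one-dimensional $\E$-spaces, hence bijective; then restriction of scalars and injectivity for the $\F$-dimension claim) is exactly that immediate argument. No gaps.
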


We now consider the \(\F\)-subalgebra  \(L\) of \(M\) generated by two
elements  \(X\)  and  \(Y\)  of  \(M_{1}\). If  \(X\)  and  \(Y\)  are
\(\E\)-linearly   dependent   then   \([Y,X]=0\);   thus   \(L\)   has
\(\F\)-dimension  at most  \(2\) and  the Lie  product is  trivial. So
assume  that  \(X\) and  \(Y\)  are  \(\E\)-linearly independent.  Now
\([Y,X]\ne  0\),  so  \(\dim_{\F}L_{2}=1\).  Let \(l\)  be  a  nonzero
element of \(L_{i}\) for  \(i \geq 2\). Since \(C_{M_{1}}{(l)}=C_{i}\)
it   follows    that   \(C_{L_{1}}{(l)}=C_{i}\cap    L_{1}\).   Define
\(d_{i}=\dim_{\F}C_{i}\cap L_{1}\).

The following lemma is crucial for what follows.
\begin{lemma}\label{lem:centraliserinL}
  Let \(L\)  be the  Lie \(\F\)-subalgebra of  \(M\) generated  by two
  \(\E\)-linearly   independent   elements    \(X\)   and   \(Y\)   of
  \(M_{1}\). The following hold:
 \begin{enumerate}
 \item\label{dimensionseconddegree} \(\dim_{\F}L_{2}=1\);
 \item\label{nondecreasing} \(\dim_{\F}L_{i+1}\geq\dim_{\F}L_{i}\) for
   every \(i\geq 2\) (in particular, \(L\) is infinite-dimensional);
 \item\label{boundedcentraliser}  \(d_{i}\leq  1\) for  every  \(i\geq
   2\);
 \item\label{trivialcentraliser}       if       \(d_{i}=0\)       then
   \(\dim_{\F}[l,L_{1}]=2\) for every \(l\in L_{i}\setminus \{0\}\);
 \item\label{nontrivialcentraliser}      if      \(d_{i}=1\)      then
   \(\dim_{\F}[l,L_{1}]=1\) for every \(l\in L_{i}\setminus \{0\}\).
 \end{enumerate}
\end{lemma}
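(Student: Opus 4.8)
The proof is governed by a single structural fact: for \(i\ge 2\) each component \(M_i\) is \(1\)-dimensional over \(\E\), so any nonzero \(l\in L_i\) spans \(M_i\) over \(\E\) and hence \(C_{M_1}(l)=C_i\); the only freedom in the picture is how the \(2\)-dimensional \(\F\)-space \(L_1=\F X+\F Y\) meets the \(1\)-dimensional \(\E\)-space \(C_i\). Everything reduces to this intersection together with the \(\F\)-linear isomorphism supplied by Lemma~\ref{lemma:Fisomorphism}. The plan is to dispose of (1) and (3) first, then use (3) to establish (2), and finally read off (4) and (5) from the kernel of a single \(\F\)-linear map.

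For (1), I note that \(L_2\) is spanned over \(\F\) by the brackets of pairs from \(\{X,Y\}\), hence by \([Y,X]\) alone; since \([Y,X]\) is a nonzero element of the \(1\)-dimensional \(\E\)-space \(M_2\), it follows that \(\dim_\F L_2=1\). For (3), suppose \(d_i=2\), that is \(C_i\cap L_1=L_1\); then \(X,Y\in C_i\), and as \(C_i\) is \(1\)-dimensional over \(\E\) this forces \(X\) and \(Y\) to be \(\E\)-linearly dependent, contrary to hypothesis. Hence \(d_i\le 1\).

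The crux is (2). Since \(L\) is generated in degree \(1\), we have \(L_{i+1}=[L_i,L_1]\). By (3) the \(\F\)-space \(L_1\) is not contained in \(C_i\), so I may choose \(z\in L_1\setminus C_i\). Lemma~\ref{lemma:Fisomorphism} then makes \(\ad z\colon M_i\to M_{i+1}\) an \(\F\)-isomorphism with \(\dim_\F[V,z]=\dim_\F V\) for every \(\F\)-subspace \(V\subseteq M_i\). Applying this with \(V=L_i\) gives \(\dim_\F[L_i,z]=\dim_\F L_i\), while \([L_i,z]\subseteq[L_i,L_1]=L_{i+1}\); hence \(\dim_\F L_{i+1}\ge\dim_\F L_i\). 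Combined with (1), this yields \(\dim_\F L_i\ge 1\) for all \(i\ge 2\), so every \(L_i\) is nonzero and \(L\) is infinite-dimensional.

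Finally, (4) and (5) follow by examining the \(\F\)-linear map \(\ad l\colon L_1\to M_{i+1}\), \(w\mapsto[l,w]\), whose kernel is exactly \(C_{L_1}(l)=C_i\cap L_1\), of dimension \(d_i\), and whose image is \([l,L_1]\). By the rank--nullity formula \(\dim_\F[l,L_1]=2-d_i\): when \(d_i=0\) this gives \(\dim_\F[l,L_1]=2\), and when \(d_i=1\) it gives \(\dim_\F[l,L_1]=1\), as required. The point demanding the most care is not any single computation but the observation underlying (2): since \(L_i\) is merely an \(\F\)-subspace of \(M_i\), and need not be \(\E\)-stable, the non-decreasing property cannot be read off from \(\E\)-dimensions and relies essentially on the \(\F\)-linearity clause of Lemma~\ref{lemma:Fisomorphism}, whose applicability is in turn secured by the non-centralising direction guaranteed by (3).
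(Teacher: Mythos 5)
Your proof is correct and follows essentially the same route as the paper: item (1) from \([Y,X]\) spanning \(L_2\), item (3) from the \(\E\)-linear independence of \(X\) and \(Y\) against the \(1\)-dimensional centraliser \(C_i\), item (2) by bracketing \(L_i\) with a non-centralising element of \(L_1\) and invoking the \(\F\)-dimension clause of Lemma~\ref{lemma:Fisomorphism}, and items (4)--(5) by rank--nullity on \(\ad l|_{L_1}\) with kernel \(C_i\cap L_1\). The paper compresses these steps (calling (1), (4), (5) trivial or obvious and writing \(L_{i+1}=[L_i,X]+[L_i,Y]\) where you choose a single \(z\in L_1\setminus C_i\)), but the underlying argument is identical.
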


\begin{proof}
 Item~\eqref{dimensionseconddegree} is trivial. Since \(X\) and \(Y\)
 are \(\E\)-linearly independent, there is no integer \(i\) such that
 both \(X\) and \(Y\) belong to \(C_{i}\) (whose \(\E\)-dimension is
 \(1\)). This immediately yields \eqref{boundedcentraliser},
 and \eqref{nondecreasing} follows by
 \(L_{i+1}=[L_{i},X]+[L_{i},Y]\) and Lemma~\ref{lemma:Fisomorphism}.
 Items~\eqref{trivialcentraliser}
 and~\eqref{nontrivialcentraliser} are obvious.
\end{proof}

We  now characterise  the 2-generator  \(\F\)-subalgebras of  \(M\) of
maximal class.
\begin{prop}\label{prop:maximalclass}
 The algebra \(L\) has maximal class if and only if \(d_{i}=1\) for
 every \(i\geq 2\).
\end{prop}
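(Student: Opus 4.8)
The plan is to run everything through the structural identity \(L_{i+1}=[L_{i},L_{1}]\), which holds because \(L\) is generated by \(L_{1}\) (this is already the content of the formula \(L_{i+1}=[L_{i},X]+[L_{i},Y]\) used in the proof of Lemma~\ref{lem:centraliserinL}), combined with the dichotomy in items~\eqref{trivialcentraliser} and~\eqref{nontrivialcentraliser}. The crux is the following local observation: if \(\dim_{\F}L_{i}=1\), say \(L_{i}=\F l\) with \(l\neq 0\), then \(L_{i+1}=[L_{i},L_{1}]=[l,L_{1}]\), so \(\dim_{\F}L_{i+1}=\dim_{\F}[l,L_{1}]\); by items~\eqref{trivialcentraliser} and~\eqref{nontrivialcentraliser} this dimension equals \(1\) precisely when \(d_{i}=1\) and equals \(2\) precisely when \(d_{i}=0\).

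For the ``if'' direction I would prove by induction on \(i\geq 2\) that \(\dim_{\F}L_{i}=1\). The base case \(\dim_{\F}L_{2}=1\) is item~\eqref{dimensionseconddegree}. For the inductive step, assuming \(\dim_{\F}L_{i}=1\) and using the hypothesis \(d_{i}=1\), the local observation yields \(\dim_{\F}L_{i+1}=1\). Thus \(\dim_{\F}L_{i}=1\) for all \(i\geq 2\); since \(\dim_{\F}L_{1}=2\) and \(L\) is generated in degree \(1\) (so that automatically \([L_{i},L_{1}]=L_{i+1}\)), this is exactly the assertion that \(L\) has maximal class.

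For the ``only if'' direction, suppose \(L\) has maximal class, so \(\dim_{\F}L_{i}=1\) for every \(i\geq 2\). Fix \(i\geq 2\) and write \(L_{i}=\F l\). The local observation gives \(\dim_{\F}[l,L_{1}]=\dim_{\F}L_{i+1}=1\); since item~\eqref{trivialcentraliser} shows that \(d_{i}=0\) would force \(\dim_{\F}[l,L_{1}]=2\), we must have \(d_{i}\neq 0\), and hence \(d_{i}=1\) by item~\eqref{boundedcentraliser}.

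I expect no serious obstacle: both implications reduce to the same one-dimensional bookkeeping, and the only point requiring care is the reduction \([L_{i},L_{1}]=[l,L_{1}]\), which is legitimate exactly because the relevant component \(L_{i}\) is one-dimensional. All the analytic work has been packaged into Lemma~\ref{lem:centraliserinL}, so the proof is essentially the bookkeeping above.
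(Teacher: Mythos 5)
Your proof is correct and follows essentially the same route as the paper: the paper's (very terse) proof consists precisely of the observation that maximal class for \(L\) means \(\dim_{\F}L_i=1\) for all \(i\ge 2\), together with an appeal to items~\eqref{dimensionseconddegree}, \eqref{trivialcentraliser} and~\eqref{nontrivialcentraliser} of Lemma~\ref{lem:centraliserinL}, which is exactly the bookkeeping (including the implicit induction) that you spell out.
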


\begin{proof}
 Observe \(L\) has  maximal class if and  only if \(\dim_{\F}L_{i}=1\)
 for     every    \(i\geq     2\).     Lemma~\ref{lem:centraliserinL},
 items~\eqref{dimensionseconddegree},    \eqref{nontrivialcentraliser}
 and~\eqref{trivialcentraliser} imply the claim.
\end{proof}

We now  restrict to quadratic  extensions. In  this case we  show that
\(L\)  is  thin  if  and  only  if  \(L_1\)  intersects  every  2-step
centraliser of \(M\) trivially.

\begin{theorem}\label{theo:main}
 If \(\vert \E:\F\vert=2\), then \(L\) is a thin Lie algebra if and
 only if \(d_{i}=0\) for every \(i\geq 2\). In this case
\(\dim_\F L_i=2\) for every \(i\ne 2\).
\end{theorem}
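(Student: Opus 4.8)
The plan is to prove both implications through the dictionary supplied by Lemma~\ref{lem:centraliserinL} between the numbers $d_i$ and the $\F$-dimension of $[l,L_1]$ for $0\neq l\in L_i$. Since $\lvert\E:\F\rvert=2$ we have $\dim_\F M_i=2$ for every $i\geq 2$, so the inclusion $L_i\subseteq M_i$ forces $\dim_\F L_i\leq 2$ throughout; combined with item~\eqref{nondecreasing} this confines each $\dim_\F L_i$ (for $i\geq 2$) to $\{1,2\}$ and makes the sequence non-decreasing, starting from $\dim_\F L_2=1$.

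For the implication that $d_i=0$ for all $i$ implies $L$ is thin, I would argue by induction that $\dim_\F L_i=2$ for all $i\geq 3$, while simultaneously verifying the covering property. If $d_i=0$, then item~\eqref{trivialcentraliser} gives $\dim_\F[l,L_1]=2$ for every nonzero $l\in L_i$; since $[l,L_1]\subseteq L_{i+1}$ and $\dim_\F L_{i+1}\leq 2$, this forces $\dim_\F L_{i+1}=2$ and hence $[l,L_1]=L_{i+1}$. The degree-$2$ step ($d_2=0$, giving $\dim_\F L_3=2$) is the base of the induction. The case $i=1$ is immediate: for nonzero $u\in L_1$ one has $0\neq[u,L_1]\subseteq L_2$ with $\dim_\F L_2=1$, so $[u,L_1]=L_2$. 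This yields the covering property for all $i\geq 1$ and the dimension statement $\dim_\F L_i=2$ for $i\neq 2$, so $L$ is thin.

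For the converse I would first record the contrapositive of items~\eqref{trivialcentraliser} and~\eqref{nontrivialcentraliser}: if $\dim_\F L_i=2$, then the covering property gives $[l,L_1]=L_{i+1}$ for nonzero $l\in L_i$, so $\dim_\F[l,L_1]=\dim_\F L_{i+1}\geq\dim_\F L_i=2$ by item~\eqref{nondecreasing}; hence $\dim_\F[l,L_1]=2$ and, by items~\eqref{boundedcentraliser} and~\eqref{nontrivialcentraliser}, $d_i=0$. Equivalently, $d_i=1$ can occur only at degrees where $\dim_\F L_i=1$. Because $L$ is thin it is not of maximal class, so $\dim_\F L_i=2$ for all sufficiently large $i$ (the dimensions are non-decreasing and bounded by $2$); hence $\dim_\F L_i=1$, and a fortiori $d_i=1$, for only finitely many $i$.

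The crux is then to exclude $d_2=1$, and this is where I expect the main obstacle to lie, since the implications from Lemma~\ref{lem:centraliserinL} control only one degree at a time. Here I would invoke the global input recalled in Section~\ref{sec:prel}: every $2$-step centraliser of $M$ occurs infinitely often, so $C_i=C_2$ for infinitely many $i$. If $d_2=1$, that is $C_2\cap L_1\neq 0$, then $C_i\cap L_1=C_2\cap L_1\neq 0$ for each such $i$, forcing $d_i=1$ infinitely often and contradicting the finiteness just established. Thus $d_2=0$, whence $\dim_\F L_3=2$ by item~\eqref{trivialcentraliser}, and monotonicity of the dimensions gives $\dim_\F L_i=2$ for all $i\geq 3$; applying the contrapositive of the previous paragraph once more yields $d_i=0$ for every $i\geq 3$, and therefore $d_i=0$ for every $i\geq 2$. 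It is precisely this recurrence of $C_2$ that rules out the a priori admissible pattern in which $L$ is thin with $\dim_\F L_3=1$.
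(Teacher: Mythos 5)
Your proposal is correct and takes essentially the same approach as the paper: the forward implication is identical, and your converse rests on exactly the same two ingredients, namely the dictionary of Lemma~\ref{lem:centraliserinL} and the fact that the \(2\)-step centraliser \(C_2\) recurs infinitely often (which the paper invokes via \cite[Lemma~3.3]{CMN} as \(d_i=d_2\) for infinitely many \(i\)). The only difference is bookkeeping: the paper shows \((d_i)\) is non-increasing and eventually equal to \(d_2\), hence constant, and then applies Proposition~\ref{prop:maximalclass}, whereas you first show \(d_i=0\) for all large \(i\), use the recurrence of \(C_2\) to force \(d_2=0\), and propagate forward --- an equivalent arrangement of the same argument.
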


\begin{proof}
 If    \(l\)    is   a    nonzero    element    of   \(L_{1}\)    then
 \([l,L_{1}]=L_{2}\).   Thus   \(L\)   is   thin  if   and   only   if
 \([l,L_{1}]=L_{i+1}\) for  every \(i\geq 2\) and  every nonzero \(l\)
 \(\in\) \(L_{i}\) (and \(L\) is not of maximal class).

 Suppose   that    \(d_{i}=0\)   for    every   \(i\geq    2\).    Let
 \(0\not=\;\)\(l\)   \(\in\)   \(L_{i}\)    with   \(i\geq   2\):   by
 Lemma~\ref{lem:centraliserinL},      item~\eqref{trivialcentraliser},
 \(\dim_{\F}[l,L_{1}]=2\).         Since        \(\dim_{\F}L_{i+1}\leq
 \dim_{\F}M_{i+1}=2\), it follows that \([l,L_{1}]=L_{i+1}\) and \(L\)
 is thin. In particular, \(\dim_{\F}L_{i}=2\) for every \(i \neq 2\).

 Conversely, suppose  that \(L\)  is thin.  The covering  property and
 items~\eqref{trivialcentraliser} and \eqref{nontrivialcentraliser} of
 Lemma~\ref{lem:centraliserinL}      imply       that      \(\dim_{\F}
 L_{i+1}=2-d_{i}\),       for       every       \(i\ge2\).       Thus,
 Lemma~\ref{lem:centraliserinL},   item~\eqref{nondecreasing},  yields
 \(d_{i}\ge d_{i+1}\).  By \cite[Lemma~3.3]{CMN},  \(d_{i}=d_{2}\) for
 infinitely many  values of \(i\).  This implies that the  sequence of
 \(d_{i}\)s   is  constant.   By  Proposition~\ref{prop:maximalclass},
 \(d_{i}=0\) for every \(i\ge 2\).
\end{proof}

\begin{example}
  Suppose that \(M\) has  exactly two distinct \(2\)-step centralisers
  \(\E x \)  and \(\E y\). For every \(\gamma,  \delta \in \E\setminus
  \F\)  with \(\gamma\neq  \delta\),  the  \(\F\)-subalgebra of  \(M\)
  generated by \(x+y\) and \(\gamma x+\delta y\) is non-metabelian and
  thin and every homogeneous component  of degree different from \(2\)
  has dimension \(2\).
\end{example}

\begin{rem}\label{rem:alpha=1}
  Let \(\E=\F(\lambda)\) be a quadratic extension of \(\F\). Let \(x\)
  and \(y\) be standard generators of \(M\) and set \(X=\alpha x+\beta
  y\), \(Y=\gamma x+ \delta y\), where \(\alpha, \beta, \gamma, \delta
  \in  \E\)  are  such that  \(\alpha\delta-\beta\gamma\ne  0\).   But
  \({\E}   y\)   is   a    \(2\)-step   centraliser   of   \(M\),   so
  \(s\alpha+t\gamma  \neq  0\) for  every  \(s,t  \in \F\),  not  both
  zero. In  particular, \(\alpha, \gamma  \neq 0\).  Since  the linear
  map  that   multiplies  each  homogeneous  component   \(M_{i}\)  by
  \(\alpha^{-i}\)  is an  automorphism of  \(M\), we  can assume  that
  \(X=x+\beta y\) and  \(Y=\gamma x+\delta y\) with  \(\gamma \not \in
  \F\).  By possibly  adding to \(Y\) an \(\F\)-multiple  of \(X\) and
  multiplying   by   a   nonzero   element  of   \(\F\),   we   obtain
  \(\gamma=\lambda\).
	
  If  \(M\) is  not metabelian,  then \({\E}x\)  is also  a \(2\)-step
  centraliser of  \(M\), so  \(s\beta+t\delta\neq 0\) for  every \(s,t
  \in \F\), not  both zero.  It follows that  \(\beta\) and \(\delta\)
  are independent  over \(\F\), so  both are nonzero.  Since  \(y\) is
  defined up to  a nonzero constant in \(\E\), we  can take \(y'=\beta
  y\) in place of \(y\).  Under  this assumption, we can represent the
  generators of \(L\),  with respect to this new basis  of \(M_1\), as
  \(X=x+y\)   and  \(Y=\lambda   x+\delta  y\),   with  \(\delta   \in
  \E\setminus  \F\),  \(\delta   \neq  \lambda\).   Conversely,  these
  generate  a  thin  subalgebra  of \(M\)  under  the  assumptions  of
  Theorem~\ref{theo:main}.   Hence  there  exist at  most  \(\vert  \E
  \setminus  \F\vert\)  isomorphism  classes of  thin  subalgebras  of
  \(M\).
\end{rem}

\begin{rem}\label{rem:L^3}
  Let  \(L\) be  a thin  algebra as  in Theorem~\ref{theo:main}.  Then
  \(L_{i}=M_{i}\) as sets, for \(i\geq 3\), but we regard \(L_i\) as a
  vector space over \(\F\) and \(M_i\)  as a vector space over \(\E\).
  Consider    the    \(L\)-module     \(L^3\)    via    the    adjoint
  representation. Observe \(\E\)  acts by multiplication as  a ring of
  \(L\)-endomorphisms      of       \(L^3\),      preserving      each
  \(L_{i}=M_{i}\). Conversely,  let \(f\) be an  \(L\)-endomorphism of
  \(L^3\) preserving the homogeneous  components. For each \(i\geq 3\)
  take  a  nonzero   \(l_{i}\in  L_{i}\).  Then  \(f(l_{i})=\sigma_{i}
  l_{i}\) for some \(\sigma_{i}\in \E\). The covering property implies
  that \(l_{i+1}=[l_{i},l_{1}]\) for some \(l_{1}\in L_{1}\), hence
  \[
  \sigma_{i+1}l_{i+1}=f(l_{i+1})=f([l_{i},l_{1}])=[f(l_{i}),
    l_{1}]=\sigma_{i}[l_{i},l_{1}]=\sigma_{i}l_{i+1},
  \]
  and so \(\sigma_{i+1}=\sigma_{i}\).  In  particular, this shows that
  \(\sigma_{i}\)  is  independent  of   the  choice  of  \(l_{i}\)  in
  \(L_{i}\).  It follows that \(f\)  acts on \(L^3\) as multiplication
  by \(\sigma_{3}\in \E\).
\end{rem}

In characteristic  zero, the unique  Lie algebra of maximal  class has
just  one 2-step  centraliser, so  \(d_{i}=d_{2}\) for  every \({i\geq
  2}\).   Therefore  the   previous   results  settle   the  case   of
characteristic zero.

Recall from  the introduction that \(L\)  is ideally \(r\)-constrained
if, for every graded nonzero ideal  \(I\), there is a positive integer
\(i\) such that \(L^{i}\supseteq I\supseteq  L^{i+r}\). It is shown in
\cite[Proposition~2]{GM02}    that     a    \(2\)-generated    ideally
\(1\)-constrained Lie algebra  is either thin or of  maximal class. By
Proposition~\ref{prop:maximalclass}  and Theorem~\ref{theo:main}  this
happens if and only if the sequence \((d_{i})_{i\geq 2}\) is constant.

We  now address  the  case of  the subalgebras  \(L\)  for which  this
sequence is not constant.

\begin{prop}\label{prop:r-IC}
 Let \(\vert \E:\F\vert=2\). Assume that the sequence \((d_{i})_{i\geq
   2}\) is not constant and  set \(D_{0}=\{i\ge 2\mid d_{i}=0\}\). Let
 \(t_{1}<t_{2}<\dots<t_{j-1}<t_{j}<\cdots\)   be   the   elements   of
 \(D_{0}\). Then \(t_{j}-t_{j-1}\le t_{1}\)  for every \(j\ge 2\). Let
 \(r=\max(t_{j}-t_{j-1})\): then  \(2\le r\le t_{1}\) and  \(L\) is an
 ideally  \(r\)-constrained  Lie  algebra  but \(L\)  is  not  ideally
 \((r-1)\)-constrained. Moreover \(\dim_{\F}L_{i}=1\)  if \(2\le i \le
 t_{1}\) and \(\dim_{\F}L_{i}=2\) otherwise.
\end{prop}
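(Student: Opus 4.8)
The plan is to pin down the dimension sequence first, since it is needed later and already exposes the relevant structure. I would begin by noting that $d_i=\dim_\F(C_i\cap L_1)$ depends only on the $2$-step centraliser $C_i$, so writing $\epsilon(C)=\dim_\F(C\cap L_1)\in\{0,1\}$ we have $d_i=\epsilon(C_i)$. As $(d_i)$ is not constant, centralisers with $\epsilon=0$ and with $\epsilon=1$ both occur, and since every $2$-step centraliser occurs infinitely often, both $D_0$ and its complement in $\{i\ge2\}$ are infinite; in particular $t_1<t_2<\cdots$ really is an infinite sequence. For the dimensions I would use Lemma~\ref{lem:centraliserinL}: if $\dim_\F L_i=2$ then item~\eqref{nondecreasing} together with $\dim_\F M_{i+1}=2$ forces $\dim_\F L_{i+1}=2$; if $\dim_\F L_i=1$, writing $L_i=\F l$, items~\eqref{trivialcentraliser} and~\eqref{nontrivialcentraliser} give $\dim_\F L_{i+1}=\dim_\F[l,L_1]=2-d_i$. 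Starting from $\dim_\F L_2=1$ and using $d_i=1$ for $2\le i<t_1$, an induction yields $\dim_\F L_i=1$ for $2\le i\le t_1$; at $i=t_1$ we have $d_{t_1}=0$, so $\dim_\F L_{t_1+1}=2$, after which the dimension stays $2$. With $\dim_\F L_1=2$ this is the stated dimension formula.

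The key step is the gap bound $t_j-t_{j-1}\le t_1$, and I expect the whole difficulty to lie in recognising which centraliser to track. Set $C=C_{t_1}$. Since $t_1=\min D_0$, no index below $t_1$ carries the centraliser $C$ (such an index would lie in $D_0$), so the least index at which $C$ occurs is $t_1$ itself; in the notation of Section~\ref{sec:prel} this means $m(C)=t_1$. By the structural facts recalled there, $C$ occurs infinitely often and successive occurrences $a_l<a_{l+1}$ satisfy $a_{l+1}-a_l\le m(C)=t_1$, and every occurrence of $C$ lies in $D_0$ because $\epsilon(C)=0$. Given consecutive elements $t_{j-1}<t_j$ of $D_0$, I would choose the occurrence with $a_l\le t_{j-1}<a_{l+1}$; then $a_{l+1}\in D_0$ and $a_{l+1}>t_{j-1}$ force $a_{l+1}\ge t_j$, whence $t_j\le a_{l+1}\le a_l+t_1\le t_{j-1}+t_1$. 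The bounds $2\le r\le t_1$ then follow at once: the gaps are bounded by $t_1$, so $r$ is attained and $r\le t_1$, while some gap is at least $2$ because the complement of $D_0$ is infinite (a centraliser with $\epsilon=1$ occurs, hence occurs infinitely often), so $D_0$ cannot contain all integers from $t_1$ onwards.

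For the ideal statement I would describe graded ideals through the one generated by a single homogeneous element. Using the dimension formula together with $L_{j+1}=[L_j,L_1]$ and items~\eqref{trivialcentraliser} and~\eqref{nontrivialcentraliser} of Lemma~\ref{lem:centraliserinL}, the ideal $\langle u\rangle$ generated by a nonzero $u\in L_i$ is one-dimensional in degrees $i,\dots,j^*$ and equals $L_j$ for every $j>j^*$, where $j^*=\min\{j\ge i:j\in D_0\}$. A general graded ideal $I$ with lowest nonzero degree $i$ contains such an $\langle u\rangle$, hence agrees with $L_j$ for all $j>j^*(i)$. When $i\le t_1$ one has $\dim_\F L_i=1$, so $I_i=L_i$ and $I=L^i$ is not properly partial (and the degree-$1$ case gives at most $\F v+L^2$, of width $1$); when $i>t_1$, the degrees $i,\dots,j^*(i)$ all exceed $t_1$, so $L_j$ there is two-dimensional and $j^*(i)-i+1\le t_k-t_{k-1}\le r$ for the gap containing $i$. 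In every case $L^i\supseteq I\supseteq L^{i+r}$, giving ideal $r$-constraint.

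Finally, to see that $(r-1)$-constraint fails, I would take a gap of maximal length, $t_k-t_{k-1}=r$, and the ideal $\langle u\rangle$ with $u\in L_{t_{k-1}+1}$. It is properly one-dimensional up to degree $t_k=(t_{k-1}+1)+(r-1)$, so the requirement $L^i\supseteq\langle u\rangle$ forces $i\le t_{k-1}+1$, while $\langle u\rangle\supseteq L^{i+r-1}$ forces $i+r-1>t_k$, that is $i>t_{k-1}+1$; these are incompatible for every $i$, so no admissible $i$ exists. The main obstacle throughout is the gap bound of the second paragraph; once $m(C_{t_1})=t_1$ is identified, the remaining assertions are bookkeeping with the dimension formula.
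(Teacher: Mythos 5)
Your proof is correct and takes essentially the same route as the paper's: the same dimension analysis via Lemma~\ref{lem:centraliserinL}, the same gap bound obtained by tracking the occurrences of \(C_{t_1}\) (whose first occurrence is at \(t_1\)) together with the facts recalled in Section~\ref{sec:prel}, the same reduction to ideals generated by a single homogeneous element for the \(r\)-constraint, and the same maximal-gap example in degree \(t_{k-1}+1\) for the failure of \((r-1)\)-constraint. The only differences are expository: you spell out details (the subset-gap argument for \(D_0\), the incompatibility of the two inequalities ruling out every \(i\)) that the paper leaves implicit.
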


\begin{proof}
  As \(\vert \E:\F\vert=2\), we have \(\dim_{\F}L_{i}\le 2\) for every
  \(i\ge2\).    Since   \(d_{t_{1}}\)   is   the   first   \(0\),   by
  Lemma~\ref{lem:centraliserinL}, items~\eqref{trivialcentraliser} and
  \eqref{nontrivialcentraliser}, \(\dim_{\F}L_{i}=1\) for every \(2\le
  i\le         t_{1}\)         and         \(\dim_{\F}L_{t_{1}+1}=2\).
  Lemma~\ref{lem:centraliserinL},    item~\eqref{nondecreasing}   then
  yields that  \(\dim_{\F}L_{i}\ge 2\) for every  \(i\ge t_{1}+1\) and
  equality     holds     since      \(\vert     \E:\F\vert=2\)     and
  \(\dim_{\E}L_{i}\le\dim_{\E}M_{i}=1\).

  As mentioned  in Section~\ref{sec:prel},  there are  infinitely many
  occurrences  of \(C_{t_{1}}\)  and if  \(C_u\) and  \(C_v\) are  two
  successive such  occurrences then \(v-u\le t_{1}\).  Hence \(D_{0}\)
  is  infinite   and  \(t_{j}-t_{j-1}\le  t_{1}\)  for   every  \(j\ge
  2\). Also  \(D_{1}=\{i\ge 2\mid  d_{i}=1\}\) is infinite,  so \(r\ge
  2\).

  Let \(I\) be a  nonzero graded ideal of \(L\) and  let \(i\) be such
  that \(I\subseteq L^{i}\) but \(I\nsubseteq L^{i+1}\). We claim that
  the ideal  generated by a nonzero  element \(l\) of \(I\)  of degree
  \(i\)  contains \(L^{i+r}\).  In particular,  \(I\supseteq L^{i+r}\)
  and therefore we need only consider the case when \(I\) is generated
  by \(l\). If  \(i=1\), then \([l,L_{1}]=L_{2}\) and we  are done. If
  \(2\le   i\le  t_{1}\)   then  \(l\)   generates  \(L_{i}\)   as  an
  \(\F\)-vector space and we are  done. So assume that \(i>t_{1}\) and
  let   \(t_{j}\)   be   such  that   \(t_{j}\ge   i>t_{j-1}\);   then
  \(t_{j}<i+r\)       by      definition       of      \(r\).       By
  Lemma~\ref{lem:centraliserinL}, items~\eqref{trivialcentraliser} and
  \eqref{nontrivialcentraliser}, we  have \(\dim_{\F}(I\cap L_{h})=1\)
  for \(i\le h\le t_{j}\)  and \(\dim_{\F}(I\cap L_{t_{j}+1})=2\) thus
  \(I\supseteq   L^{t_{j}+1}\supseteq   L^{i+r}\)  and   \(I\nsupseteq
  L^{t_{j}}\).     If     we     choose    \(j_{0}\)     such     that
  \(t_{j_{0}}-t_{j_{0}-1}=r\)  and  put \(i_{0}=t_{j_{0}-1}+1\),  then
  the  ideal generated  by an  element  of degree  \(i_{0}\) does  not
  contain    \(L^{i_{0}+r-1}\).   Hence    \(L\)   is    not   ideally
  \((r-1)\)-constrained.
\end{proof}

\section{From thin to maximal class}\label{sec:mxl_class}

Let \(L=\bigoplus_{i=1}^{\infty}L_i\) be a  graded Lie algebra, over a
field \(\F\), generated  by \(L_1\) as a Lie  algebra. A \emph{graded}
module  is an  \(L\)-module \(V=\bigoplus_{i=n_0}^{\infty}  V_i\) over
\(\F\),  for  some  \(n_0  \in   \N\),  such  that  \(V_i\cdot  L_j\le
V_{i+j}\). The module \(V\)  is \emph{just infinite-dimensional} if it
is infinite-dimensional  and every graded submodule  is either trivial
or has finite codimension. If  the image of the adjoint representation
of \(L\) is a just infinite-dimensional \(L\)-module, then we say that
\(L\) is a  just infinite-dimensional Lie algebra. This  is always the
case when  \(L\) is  an ideally  \(r\)-constrained Lie  algebra. Every
non-trivial    (possibly    non-graded)    submodule   of    a    just
infinite-dimensional graded  module \(V\) has finite  codimension (see
\cite[Lemma~2.3]{GMS2004}).   In   particular,  a   finite-dimensional
submodule of \(V\)  is necessarily trivial. A  linear map \(\phi\colon
V\to V\) is  {\em graded} of degree \(d\)  if \(\phi(V_i)\le V_{i+d}\)
for  a  non-negative integer  \(d\).  Let  \(\grend(V)\) be  the  ring
generated   by  the   graded   \(\F\)-linear  maps   of  \(V\).   Thus
\(\grend(V)=\bigoplus_{d=0}^{\infty} \grend_{d}(V)\) is a graded ring,
where  each homogeneous  component \(\grend_{d}(V)\)  consists of  the
\(\F\)-linear maps of \(V\) having degree \(d\).

\begin{lemma}[Schur's lemma]
 Let \(V\) be a graded just infinite-dimensional \(L\)-module. If
 \(0\ne \phi\in\End(V)\) is an \(L\)-endomorphism of \(V\), then \(\phi\)
 is an injective map.
\end{lemma}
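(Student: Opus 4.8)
The plan is to study the kernel and image of \(\phi\) as \(L\)-submodules and to play the just infinite-dimensional dichotomy against the fact that finite-dimensional submodules vanish. First I would observe that, since \(\phi\) commutes with the \(L\)-action, both \(\ker\phi\) and \(\im\phi\) are \(L\)-submodules of \(V\): if \(v\in\ker\phi\) and \(l\in L\), then \(\phi(v\cdot l)=\phi(v)\cdot l=0\), so \(v\cdot l\in\ker\phi\); and \(\phi(v)\cdot l=\phi(v\cdot l)\in\im\phi\) for every \(v\), so \(\im\phi\) is a submodule. Note that \(\phi\) is not assumed to be graded, so \(\ker\phi\) need not be a graded submodule; this is precisely why I would invoke the version of \cite[Lemma~2.3]{GMS2004} quoted above, which applies to possibly non-graded submodules.

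Next I would apply the dichotomy to \(\ker\phi\): either \(\ker\phi=0\), in which case \(\phi\) is injective and we are done, or \(\ker\phi\) has finite codimension in \(V\). In the latter case the first isomorphism theorem gives \(\im\phi\cong V/\ker\phi\) as \(L\)-modules, so \(\im\phi\) is a finite-dimensional submodule of \(V\). Since a finite-dimensional submodule of a just infinite-dimensional module is necessarily trivial, we conclude \(\im\phi=0\), that is \(\phi=0\), contradicting the hypothesis \(\phi\ne0\). Hence \(\ker\phi=0\) and \(\phi\) is injective.

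I expect no serious obstacle here, since the two facts doing the work—that a nonzero submodule has finite codimension and that a finite-dimensional submodule is trivial—are both recorded immediately before the statement. The only point requiring care is that \(\phi\) carries no gradedness assumption, so one must use the non-graded form of the codimension dichotomy rather than the defining property of just infinite-dimensionality for graded submodules.
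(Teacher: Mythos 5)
Your proof is correct and uses exactly the same ingredients as the paper's: the non-graded codimension dichotomy for submodules of a just infinite-dimensional module, the triviality of finite-dimensional submodules, and the first isomorphism theorem. The only (immaterial) difference is direction: the paper argues directly that \(\im\phi\ne 0\) forces \(\ker\phi\) to be finite-dimensional and hence zero, while you run the contrapositive, assuming \(\ker\phi\ne 0\) and deriving the contradiction \(\im\phi=0\).
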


\begin{proof}
  Let \(U=\ker \phi\)  and \(W=\im \phi \). By  assumption, \(W\ne 0\)
  so \(\dim U=\codim W\) is finite. Hence \(\ker \phi=0\).
\end{proof}

\begin{cor}\label{cor:just_infinite_module}
  Let \(V\)  be a  graded just infinite-dimensional  \(L\)-module. The
  ring  \(\grend_{L}(V)\), consisting  of  the \(L\)-endomorphisms  in
  \(\grend(V)\),  is  a  graded  domain  whose  homogeneous  component
  \(\grend_{L,d}(V)\) of degree \(d\) has \(\F\)-di\-men\-sion at most
  \(\inf_{i\ge  n_0}(\dim  V_{d+i})\).   In  particular,  the  subring
  \(\E=\grend_{L,0}(V)\) of the graded \(L\)-endomorphisms of \(V\) of
  degree \(0\) is  a skew-field extension of \(\F\) of  degree at most
  \(\inf_{i\ge n_0}(\dim V_i)\) and contains \(\F\) in its centre.
\end{cor}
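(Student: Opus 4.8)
The plan is to build everything on the Schur's lemma just proved, namely that a nonzero $L$-endomorphism of $V$ is injective. First I would establish that $\grend_L(V)$ is a graded domain. Take two nonzero \emph{homogeneous} elements $\phi\in\grend_{L,d}(V)$ and $\psi\in\grend_{L,e}(V)$. Their composite $\phi\psi$ is again an $L$-endomorphism, graded of degree $d+e$, and as a composition of two injective maps (each injective by Schur's lemma) it is injective; since $V\ne 0$ this forces $\phi\psi\ne 0$. Thus $\grend_L(V)$ has no homogeneous zero divisors. To upgrade this to the domain property I would use the standard leading-component argument: for arbitrary nonzero $\alpha,\beta\in\grend_L(V)$, writing $\alpha_p,\beta_q$ for their lowest-degree nonzero homogeneous components, the degree-$(p+q)$ component of $\alpha\beta$ equals $\alpha_p\beta_q\ne 0$, so $\alpha\beta\ne 0$.

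Next I would prove the dimension bound for $\grend_{L,d}(V)$ by an evaluation argument. Fix any $i\ge n_0$ with $V_i\ne 0$ and choose a nonzero $v\in V_i$. The $\F$-linear evaluation map $\grend_{L,d}(V)\to V_{i+d}$, $\phi\mapsto\phi(v)$, is injective: if $\phi(v)=0$ then $v\in\ker\phi$, and since $v\ne 0$ Schur's lemma forces $\phi=0$. Hence $\dim_\F\grend_{L,d}(V)\le\dim_\F V_{i+d}$, and taking the infimum over admissible $i$ gives the asserted bound $\inf_{i\ge n_0}(\dim V_{d+i})$.

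For the degree-zero part I set $\E=\grend_{L,0}(V)$. Being a subring of the graded domain $\grend_L(V)$, it is itself a domain. The scalar maps $v\mapsto\lambda v$ for $\lambda\in\F$ are $L$-endomorphisms of degree $0$, so $\F\cdot\Id\subseteq\E$; moreover scalar multiplication commutes with every $\F$-linear map, hence $\F\cdot\Id$ lies in the centre of $\E$. The specialisation $d=0$ of the previous bound gives $\dim_\F\E\le\inf_{i\ge n_0}(\dim V_i)$.

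The step I expect to be the crux is passing from ``$\E$ is a domain'' to ``$\E$ is a skew-field''. The clean route is to invoke finite-dimensionality: when the infimum above is finite, $\E$ is a finite-dimensional $\F$-algebra which is a domain, and a finite-dimensional domain over a field is automatically a division ring (for $0\ne\phi\in\E$, left multiplication by $\phi$ is an injective, hence bijective, $\F$-linear self-map of $\E$, which together with the analogous statement for right multiplication produces a two-sided inverse). One must be careful that the infimum is indeed finite — this holds precisely when some homogeneous component $V_i$ is finite-dimensional, which is the relevant situation here; I would also note the harmless edge case where $V_i=0$, for which the evaluation argument is vacuous and contributes nothing to the infimum. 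Alternatively, one can argue invertibility directly: a nonzero $\phi\in\E$ is injective on each finite-dimensional $V_i$ and hence bijective there, so $\phi$ is a bijective graded $L$-endomorphism whose inverse again lies in $\E$.
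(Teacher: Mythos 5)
Your proposal is correct and takes essentially the same route as the paper: the paper's entire written proof is just your evaluation-map argument (the map \(\phi\mapsto\phi(v)\) from \(\grend_{L,d}(V)\) to \(V_{i+d}\), injective by Schur's lemma), and it leaves the graded-domain, centrality, and skew-field claims implicit. The extra details you supply — composition of injectives plus the lowest-component argument for the domain property, and the fact that a finite-dimensional \(\F\)-algebra without zero divisors is a division ring — are sound and fill in exactly what the paper omits.
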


\begin{proof}
  Let  \(v\) be  a nonzero  element in  \(V_i\). The  map \(\psi\colon
  \grend_{L,d}(V)\to V_{d+i}\)  defined by \(\phi\mapsto  \phi(v)\) is
  an  injective \(\F\)-linear  map, so  
  \begin{equation*}
    \dim_{\F} \grend_{L,d}(V)  =
    \dim_\F \im \psi \le \dim_{\F} V_{i+d}.
  \end{equation*}
\end{proof}

\begin{rem}\label{rem:field}
  For   every  \(i\)   the   homogeneous  component   \(V_i\)  is   an
  \(\E\)-module. In particular, \(\vert \E:\F \vert \cdot \dim_\E V_i=
  \dim_\F V_i\). If  \(\dim_\F V_i\) is a prime \(p\)  for some \(i\),
  then \(\vert \E:\F \vert \) is \(1\) or \(p\), so \(\E\) is a simple
  extension of the field \(\F\).
\end{rem}

\begin{rem}\label{rem:faithful}
  Let   \(I\)   be   a   non-trivial    graded   ideal   in   a   just
  infinite-dimensional Lie algebra  \(L\).  The centraliser \(C_L(I)\)
  is also  a graded  ideal of  \(L\), so  either \(C_L(I)=0\),  or the
  quotient  \(L/C_L(I)\) has  finite  dimension. In  the latter  case,
  \(C_L(I)\cap I\) is abelian and  has finite codimension. Hence \(L\)
  is soluble if and only if \(L\) has a non-trivial graded ideal \(I\)
  with  \(C_L(I)\ne 0\).   Also,  a  graded ideal  \(I\)  of \(L\)  is
  abelian if  and only if  \(C_L(I)\ne 0\) (see  \cite[Proposition 2.9
    and Corollary  2.11] {GMS2004}).  Therefore \(L\) is  insoluble if
  and only  if for every non-trivial  graded ideal \(I\) of  \(L\) the
  adjoint representation of \(L\) over \(I\) is faithful.
\end{rem}

From now  on, \(T\)  is a just  infinite-dimensional Lie  algebra over
\(\F\)  such  that   \(\dim_\F  T_i=2\)  for  every   \(i\ne  2\).  By
Corollary~\ref{cor:just_infinite_module}  and  Remark~\ref{rem:field},
the  ring  \(\E\) of  \(T\)-endomorphisms  of  \(T^3\) preserving  the
homogeneous components  is a  field extension of  \(\F\) of  degree at
most \(2\).
\begin{lemma}\label{lem:abelian}
  Suppose the field \(\E\) has degree \(2\) over \(\F\).
 If \(I\) is a non-trivial maximal abelian graded ideal of
 \(T\), then it equals \(T^k\) for some \(k\geq 2\) and is unique.
\end{lemma}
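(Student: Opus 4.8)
The plan is to show that a maximal abelian graded ideal \(I\) is self-centralising, to identify it with the centraliser of a high Lie power \(T^M\), and then to use the degree-\(2\) hypothesis to force each homogeneous component of \(I\) to be either zero or full. Since \(I\) is abelian, Remark~\ref{rem:faithful} gives \(C_T(I)\ne 0\) and \(I\subseteq C_T(I)\); applying the criterion of that remark once more to the nonzero graded ideal \(C_T(I)\), whose centraliser contains \(I\ne 0\), shows that \(C_T(I)\) is itself abelian, so maximality yields \(C_T(I)=I\). In particular \(T/I\) is finite-dimensional, so \(I\) has finite codimension and \(T^M\subseteq I\) for some \(M\). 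The same remark applied to \(T^M\) (which is abelian, being contained in \(I\)) shows \(C_T(T^M)\) is abelian; as it contains \(I\), maximality gives \(I=C_T(T^M)\).

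The core step is where \(\lvert\E:\F\rvert=2\) enters. For \(i\ge 3\) we have \(\dim_\E T_i=1\) by Remark~\ref{rem:field}, and the \(\E\)-action commutes with bracketing. Hence for a homogeneous \(x\in I\) of degree at least \(3\) and any \(\phi\in\E\) we get \([\phi x,T^M]=\phi[x,T^M]=0\), so \(\phi x\in C_T(T^M)=I\). Thus \(I\cap T^3\) is \(\E\)-invariant, and since the only \(\E\)-submodules of the \(\E\)-line \(T_i\) are \(0\) and \(T_i\), each \(I_i\) with \(i\ge 3\) equals \(0\) or \(T_i\). Because \([T_i,T_1]=T_{i+1}\), the set \(\{i\ge 3\mid I_i=T_i\}\) is closed upwards, and as \(I\) has finite codimension it is a final segment \(\{i\ge k_0\}\).

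It remains to treat the low degrees. First, the centre of \(T\) is zero: a nonzero homogeneous central element in degree \(\ge 2\) would annihilate some \(T_i\) and so make \(\dim_\F T_{i+1}<2\), while a central element of \(T_1\) would, with generation in degree \(1\), force \(T\) finite-dimensional. Now if \(0\ne a\in I_1\), then \([a,T_1]\subseteq I_2\): when \(I_2=0\) this makes \(a\) central, a contradiction; when \(I_2=T_2\), the element \(a\) centralises the one-dimensional \(T_2\), so with \(b\in T_1\setminus\F a\) we get \(T_3=[T_2,a]+[T_2,b]=[T_2,b]\), which has dimension at most \(1\), contradicting \(\dim_\F T_3=2\). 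Hence \(I_1=0\), and \(I=T^k\) with \(k=2\) (if \(I_2=T_2\)) or \(k=k_0\ge 3\). For uniqueness, given another maximal abelian graded ideal \(I'\), choose \(M\) with \(T^M\subseteq I\cap I'\); then \(C_T(T^M)\) is abelian and contains both \(I\) and \(I'\), so by maximality it equals each, giving \(I=I'\).

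I expect the main obstacle to be the interface between the core step and the low-degree bookkeeping: verifying that the degree-\(2\) hypothesis genuinely forces \(\E\)-invariance of \(I\) in high degrees is the conceptual heart, while the most delicate point is eliminating a degree-\(1\) component, where one must exploit the dimension clash at the single one-dimensional component \(T_2\) rather than at the \(\E\)-lines in higher degrees (on which \(\ad b\) is an isomorphism and yields no contradiction).
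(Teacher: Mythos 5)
Your proof is correct, but it takes a genuinely different route from the paper's. The paper uses maximality directly and locally: for \(\alpha\in\E\) the graded ideal \(I+\alpha I\) is abelian, since \([x,\alpha y]=\alpha[x,y]=0\) for \(x,y\in I\), so maximality forces \(\alpha I\subseteq I\) in one stroke; then it suffices to look at the minimal degree \(k\) with \(I\cap T_k\ne 0\) (the case \(k=1\) is excluded by the same dimension clash at \(T_3\) that you use), observe that \(I\cap T_k\) is a nonzero \(\E\)-subspace of the \(\E\)-line \(T_k\) (or all of \(T_2\) when \(k=2\)), hence equals \(T_k\), so that \(I\) contains the ideal \(T^k\) generated by \(T_k\) and is contained in \(T^k\) by minimality of \(k\). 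You instead run the just-infinite machinery of Remark~\ref{rem:faithful}: \(I\) is self-centralising, has finite codimension, and equals the double centraliser \(C_T(T^M)\), after which \(\E\)-stability of the components of degree at least \(3\) is automatic because \(\E\) commutes with the adjoint action; the component-by-component bookkeeping and the degree-\(1\) exclusion then finish the classification, and uniqueness comes from the same double-centraliser identification. The paper's argument is shorter and essentially self-contained, needing only the maximality trick; yours is longer and leans on the quoted results from \cite{GMS2004}, but it is more structural -- it characterises \(I\) intrinsically as a centraliser and proves uniqueness by an argument independent of the classification \(I=T^k\), whereas the paper leaves uniqueness implicit (it follows because the ideals \(T^k\) are nested, so the larger of two maximal ones would contain the other).

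One blemish, which does not affect correctness: your claim that \(T\) has no nonzero homogeneous central elements of degree \(\ge 2\) is both unnecessary and not correctly justified. For central \(z\in T_j\) with \(j\ge 3\), writing \(T_j=\F z+\F w\) gives \(T_{j+1}=[w,T_1]\), which can perfectly well be \(2\)-dimensional, so no dimension drop follows as you assert. Fortunately your argument only ever invokes centrality for an element of \(T_1\), and that case you justify correctly (it would force \(T_2=[T_1,T_1]=0\) and hence \(T=T_1\), contradicting infinite dimension).
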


\begin{proof}
  Let \(k\)  be the smallest  integer such that \(I\cap  T_{k}\ne 0\).
  Note  \(k   \ge  2\)  because  \(\dim_\F(T_3)=2\).   We  claim  that
  \(I=T^k\): since  the ideal generated  by \(T_k\) is \(T^k\),  it is
  enough to  show that  \(I\cap T_{k}=T_{k}\).  When \(k=2\),  this is
  obvious  as  \(\dim_{\F}T_2=1\).   When   \(I\le  T^3\),  the  ideal
  \(\alpha I\)  is abelian for every  \(\alpha\in\E\).  As \([x,\alpha
    y]=\alpha[x,y]=0\) for every  \(x\) and \(y\) in  \(I\), the ideal
  \(I+\alpha  I\)  is  abelian  and \(\alpha  I\subseteq  I\)  by  the
  maximality of \(I\).  Therefore \(I\cap T_{k}\) is  a nonzero vector
  space over \(\E\), whence the claim.
\end{proof}

When  \(\vert   \E:\F  \vert  =   2\)  we  construct  a   Lie  algebra
\(N=\bigoplus_{i=1}^{\infty}N_{i}\)  of  maximal   class  over  \(\E\)
containing \(T\) as an \(\F\)-subalgebra  generated by two elements of
\(N_1\).

Assume first that \(T\)  is not metabelian. Let \(R=\frend_\E(\E\oplus
I)\) be  the Lie  algebra of  \(\E\)-linear maps  of the  vector space
\(\E\oplus I\), where \(I=T^3\) if \(T\) is insoluble, or \(I=T^k\) is
the unique maximal abelian graded ideal of \(T\), otherwise. Note that
\(k\ge 3\)  as \(T\) is not  metabelian, so \(I\) is  an \(\E\)-vector
space.  Let  \(z\in T\setminus I\)  be a homogeneous element  which is
central modulo~\(I\). By Remark~\ref{rem:faithful}, the map
\[
\rho\colon t\mapsto \left[
 \begin{array}{c|c}
 0 & [z,t]\\
 \hline
 0 & \ad t|_{I}
 \end{array}
 \right] \, ,
\]
which is the  adjoint representation of \(T\) over  the ideal \(\left<
z\right>\oplus  I\),   is  a  faithful  representation   of  \(T\)  in
\(R\). This  is easy  in the  insoluble case;  for soluble  \(T\), see
\cite[Lemma~2.8]{GMS2004}, taking \(J=\F z+I\).

The field  \(\E\) and  \(\rho(T)\) are  \(\F\)-subspaces of  \(R\). We
claim that
\[
N = \E\cdot \rho(T)=\{\lambda \cdot \rho(t) \mid \lambda\in \E \text{
 and } t\in T \}\subseteq R
\]
is a Lie algebra of maximal class over \(\E\). Let \(N_i=\E\cdot \rho(
T_i)\).  If  \(i\ge  3\) then  \(N_i=\E\cdot  \rho(T_i)=\rho  (\E\cdot
T_i)=\rho (T_i)\). Hence \(N_i\) is  a module over \(\E\) of dimension
\(1\)  for  every  \(i\ge  3\).  Clearly,  the  homogeneous  component
\(N_2=\E\cdot  \rho  (T_2)\)  has  dimension \(1\)  over  \(\E\).  The
dimension  of  \(N_1\)  over  \(\E\)  is  \(2\),  otherwise  \(T\)  is
abelian.  Let \(S\)  be the  \(\E\)-subalgebra of  \(N\) generated  by
\(N_1\).  This  algebra  contains   the  \(\F\)-algebra  generated  by
\(\rho(T_1)\), namely  \(\rho(T)\), so  \(S\) contains  also \(\E\cdot
\rho(T)=N\).  Hence  \(N\)  is  generated over  \(\E\)  by  its  first
homogeneous component \(N_1\).

When  \(T\)  is  metabelian,  the construction  must  be  modified  by
considering, instead of \(\rho\), the adjoint representation \(\rho'\)
over the ideal \(\left<Y\right>\oplus T^2\):
\[
\rho'\colon t\mapsto \left[
\begin{array}{c|c|c}
	0 & \alpha & [Y,s]\\
	\hline
	0 & 0 & [[Y,X], t]\\
	\hline
	0 & 0 & \ad t|_{T^3}
\end{array}
\right] \in R'=\frend(\E\oplus \E \oplus T^3) \] where \(T_1=\F\cdot X
+ \F\cdot  Y\) and \(t=\alpha  X+\beta Y +  s\) with \(s\in  T^2\). As
above, the Lie algebra \(N=\E\cdot \rho'(T)\) is of maximal class over
\(\E\).
\smallskip

Now let  \(\E\supset \F\) be a  quadratic extension of fields  and let
\(M\) be a Lie algebra of maximal  class over \(\E\). Let \(L\) be the
thin \(\F\)-subalgebra of \(M\) generated by two elements of \(M_{1}\)
as in  Theorem~\ref{theo:main}. By Remark~\ref{rem:L^3}, we  can apply
the above construction,  starting from the algebra \(L\),  to obtain a
Lie algebra \(N\) of maximal class  over \(\E\) containing \(L\) as an
\(\F\)-subalgebra. We show that we can recover the algebra \(M\) up to
isomorphism.
\begin{prop}\label{prop:back}
 The Lie algebras \(M\) and \(N\) of maximal class are isomorphic.
\end{prop}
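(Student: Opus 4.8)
The plan is to exhibit an explicit graded $\E$-algebra isomorphism $\phi\colon N\to M$, determined by the requirement that $\phi(\rho(t))=t$ for all $t\in L$; equivalently, since every element of $N$ has the form $\lambda\cdot\rho(t)$ with $\lambda\in\E$ and $t\in L$, by setting $\phi(\lambda\cdot\rho(t))=\lambda t$, where the product $\lambda t$ is computed using the $\E$-action on $M$. The two structural facts I would use are that $M_i=\E\cdot L_i$ for every $i$ (so that, componentwise, $M$ is the $\E$-span of its $\F$-subalgebra $L$: indeed $L_i=M_i$ for $i\ge3$ by Remark~\ref{rem:L^3}, while $X,Y$ and $[Y,X]$ span $M_1$ and $M_2$ over $\E$), matching $N_i=\E\cdot\rho(L_i)$ on the other side, and that $\rho$ restricts to an isomorphism of $\F$-algebras $L\to\rho(L)$ by faithfulness.

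First I would pin down $\phi$ degree by degree. For $i\ge 3$ we have $L_i=M_i$ and, by Remark~\ref{rem:L^3}, the field $\E$ acts on $L_i$ exactly as scalar multiplication inherited from $M$; I would check that $\rho$ is $\E$-linear on these components, i.e.\ $\rho(\lambda t)=\lambda\cdot\rho(t)$ for $t\in L_i$, so that $\rho|_{L_i}\colon L_i\to N_i=\rho(L_i)$ is an isomorphism of $1$-dimensional $\E$-spaces, and I set $\phi|_{N_i}=(\rho|_{L_i})^{-1}$. For $i=2$ I would send the $\E$-basis $\rho([Y,X])$ of $N_2$ to $[Y,X]\in M_2$; for $i=1$ I would send the $\E$-basis $\rho(X),\rho(Y)$ of $N_1$ to the $\E$-basis $X,Y$ of $M_1$. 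A dimension count ($\dim_\E N_i=\dim_\E M_i$ in every degree) shows each $\phi|_{N_i}$ is an $\E$-linear isomorphism, so that $\phi=\bigoplus_i\phi|_{N_i}$ is a graded $\E$-linear bijection and $\phi\circ\rho$ is the inclusion $L\hookrightarrow M$.

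It then remains to verify that $\phi$ is a Lie homomorphism. Writing arbitrary elements of $N$ as $\E$-combinations $\sum_i\lambda_i\rho(t_i)$ with $t_i\in L_i$, I would compute, using the $\E$-bilinearity of the bracket of $N$ and the fact that $\rho$ is a Lie homomorphism, that $[\lambda\rho(t),\mu\rho(s)]_N=\lambda\mu\,\rho([t,s]_L)$; applying $\phi$ and using $\phi\circ\rho=$ inclusion together with the $\E$-linearity of $\phi$ gives $\lambda\mu[t,s]_M=[\lambda t,\mu s]_M=[\phi(\lambda\rho(t)),\phi(\mu\rho(s))]_M$, as required. Since this computation is insensitive to the degrees involved, it covers all brackets at once.

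The step I expect to be the crux is the $\E$-linearity of $\rho$ on the components of degree $\ge 3$, which is what makes the two occurrences of $\E$ — the abstract ring $\grend_{L,0}(L^3)$ identified in Remark~\ref{rem:L^3}, and the scalar field acting on $R=\frend_\E(\E\oplus I)$ — coincide without a Galois twist. This I would establish by inspecting the blocks of $\rho(t)$: for $t\in L_i$ with $i\ge 3$ one has $t\in I$, and the entries $[z,\lambda t]$ and $\ad(\lambda t)|_I$ equal $\lambda[z,t]$ and $\lambda\cdot\ad(t)|_I$ respectively, precisely by the $\E$-bilinearity of the bracket of $M$; the same inspection applies verbatim to the matrix $\rho'$ used in the metabelian case, so the argument is uniform. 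Once this compatibility is in hand, everything else is a routine verification.
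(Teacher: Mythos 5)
Your proof is correct, and it rests on the same two pillars as the paper's: the \(\E\)-linearity of \(\rho\) (equivalently, \(\E\cdot\rho(L)=\rho(\E\cdot L)\)) and the componentwise equality \(M_i=\E\cdot L_i\). Where you genuinely diverge is in how the bijection is obtained. The paper's proof is one line: viewing \(\rho\) as defined on all of \(M\) by the same matrix formula, it writes \(N=\E\cdot\rho(L)=\rho(\E\cdot L)=\rho(M)\) and concludes \(\rho(M)\cong M\) because \(M\) is just infinite-dimensional --- the kernel of \(\rho\) on \(M\) is a graded ideal, and it cannot have finite codimension since the image contains \(\rho(L)\cong L\), which is infinite-dimensional, so the kernel is zero. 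You instead build the inverse \(\phi\colon N\to M\) degree by degree and check the homomorphism identity by hand, getting bijectivity in each degree from the dimension counts (\(\dim_{\E}N_1=2\), \(\dim_{\E}N_i=1\) for \(i\ge 2\)) already established in the construction of \(N\). The paper's route buys brevity: injectivity is a soft consequence of a structural property of \(M\), with no bookkeeping. Your route buys independence from just-infinite-dimensionality at this step, at the price of re-verifying the grading of \(N\) and the homomorphism property explicitly. Finally, the step you single out as the crux --- that the abstract field \(\E=\grend_{L,0}(L^3)\) of Remark~\ref{rem:L^3} acts on \(I\) as scalar multiplication inherited from \(M\), so that \(\rho(\lambda t)=\lambda\cdot\rho(t)\) with no Galois twist, for \(\rho'\) in the metabelian case as well --- is precisely the content of the paper's middle equality \(\E\cdot\rho(L)=\rho(\E\cdot L)\), which the paper leaves implicit; making it explicit is a worthwhile clarification.
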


\begin{proof}
  Observe  \(N=\E\cdot \rho(  L)=\rho  (\E  \cdot L)=\rho(M)\cong  M\)
  where  the   last  isomorphism  holds   because  \(M\)  is   a  just
  infinite-dimensional Lie algebra.
\end{proof}

\begin{rem}\label{rem:upperbound}
  Let \(M\) be an arbitrary algebra  of maximal class over \(\E\) with
  standard generators  \(x\) and \(y\).  Every  \(2\)-step centraliser
  other  than \(\E  y\) can  be written  as \(\E  (x+\lambda y)\)  for
  unique \(\lambda  \in \E\).  Let  \(\mathcal{L}\) be the set  of all
  such  \(\lambda\)s.   Let \(X=  \alpha  x  +  \beta  y =  \alpha(  x
  +\alpha^{-1}\beta  y)\) and  \(Y =  \gamma x+  \delta y=\gamma  (x +
  \gamma^{-1}\delta y)\)  be two \(\E\)-linearly  independent elements
  in  \(M_1\).  By  Theorem~\ref{theo:main}  the \(\F\)-subalgebra  of
  \(M\) generated by \(X\) and \(Y\) is thin if and only if \(\F \cdot
  X+\F \cdot  Y\) intersects  every \(2\)-step  centraliser trivially.
  This    occurs     if    and    only    if     the    affine    line
  \({\Lambda}=\Set{t\alpha^{-1}\beta+(1-t)\gamma^{-1}\delta\mid   t\in
    \F}\)  intersects  \(\mathcal{L}\)   trivially,  so  \(\mathcal{L}
  \subseteq    \E\setminus   \Lambda\).     There    are   at    least
  \(\vert\E\setminus
  \F\vert^{\aleph_0}=\vert\E\vert^{\aleph_0}=\vert\F\vert^{\aleph_0}\)
  algebras of  maximal class satisfying this  condition \cite{CMN}. By
  Proposition~\ref{prop:back},  there  are  at   least  as  many  thin
  \(\F\)-algebras  all of  whose homogeneous  components of  degree at
  least    3   have    dimension   2.     Hence   the    upper   bound
  \(\vert\F\vert^{\aleph_0}\)   for   the    number   of   such   thin
  \(\F\)-algebras is attained.
\end{rem}


\begin{thebibliography}{CMNS96}

\bibitem[AJ01]{AviJur}
M.~Avitabile and G.~Jurman, \emph{Diamonds in thin {L}ie algebras}, Boll.
  Unione Mat. Ital. Sez. B Artic. Ric. Mat. (8) \textbf{4} (2001), no.~3,
  597--608. \MR{1859998 (2003a:17038)}

\bibitem[AJM10]{AJM}
Marina Avitabile, Giuseppe Jurman, and Sandro Mattarei, \emph{The structure of
  thin {L}ie algebras with characteristic two}, Internat. J. Algebra Comput.
  \textbf{20} (2010), no.~6, 731--768. \MR{2726572 (2011j:17034)}

\bibitem[CM99]{CaMa:thin}
Andrea Caranti and Sandro Mattarei, \emph{Some thin {L}ie algebras related to
  {A}lbert-{F}rank algebras and algebras of maximal class}, J. Austral. Math.
  Soc. Ser. A \textbf{67} (1999), no.~2, 157--184, Group theory. \MR{1717411
  (2000j:17036)}

\bibitem[CMN97]{CMN}
A.~Caranti, S.~Mattarei, and M.~F. Newman, \emph{Graded {L}ie algebras of
  maximal class}, Trans. Amer. Math. Soc. \textbf{349} (1997), no.~10,
  4021--4051. \MR{1443190 (98a:17027)}

\bibitem[CMNS96]{CMNS}
A.~Caranti, S.~Mattarei, M.~F. Newman, and C.~M. Scoppola, \emph{Thin groups of
  prime-power order and thin {L}ie algebras}, Quart. J. Math. Oxford Ser. (2)
  \textbf{47} (1996), no.~187, 279--296. \MR{1412556 (97h:20036)}

\bibitem[CN00]{CN}
A.~Caranti and M.~F. Newman, \emph{Graded {L}ie algebras of maximal class.
  {II}}, J. Algebra \textbf{229} (2000), no.~2, 750--784. \MR{1769297
  (2001g:17041)}

\bibitem[CVL00]{CV-L}
A.~Caranti and M.~R. Vaughan-Lee, \emph{Graded {L}ie algebras of maximal class.
  {IV}}, Ann. Scuola Norm. Sup. Pisa Cl. Sci. (4) \textbf{29} (2000), no.~2,
  269--312. \MR{1784176 (2002c:17044)}

\bibitem[CVL03]{MR1968427}
\bysame, \emph{Graded {L}ie algebras of maximal class. {V}}, Israel J. Math.
  \textbf{133} (2003), 157--175. \MR{1968427 (2004d:17034)}

\bibitem[GM02]{GM02}
Norberto Gavioli and Valerio Monti, \emph{Ideally constrained {L}ie algebras},
  J. Algebra \textbf{253} (2002), no.~1, 31--49. \MR{1925007}

\bibitem[GMS04]{GMS2004}
Norberto Gavioli, Valerio Monti, and Carlo~Maria Scoppola, \emph{Just infinite
  periodic {L}ie algebras}, Finite groups 2003, Walter de Gruyter GmbH \& Co.
  KG, Berlin, 2004, pp.~73--85. \MR{2125067 (2006i:17044)}

\bibitem[GMY01]{GMY}
Norberto Gavioli, Valerio Monti, and David~S. Young, \emph{Metabelian thin
  {L}ie algebras}, J. Algebra \textbf{241} (2001), no.~1, 102--117. \MR{1838846
  (2002d:17032)}

\bibitem[Jur05]{Jurman05}
G.~Jurman, \emph{Graded {L}ie algebras of maximal class. {III}.}, J. Algebra
  \textbf{284} (2005), 435--461. \MR{2114564 (2005k:17041)}

\bibitem[SZ97]{ShZe:narrow-Witt}
Aner Shalev and Efim~I. Zelmanov, \emph{Narrow {L}ie algebras: a coclass theory
  and a characterization of the {W}itt algebra}, J. Algebra \textbf{189}
  (1997), no.~2, 294--331. \MR{1438178 (98d:17032)}

\bibitem[You01]{Young:thesis}
David~S. Young, \emph{Thin {Lie} algebras with long second chains}, Ph.D.
  thesis, The Australian National University, March 2001.

\end{thebibliography}
\bibliographystyle{amsalpha}

\def\polhk#1{\setbox0=\hbox{#1}{\ooalign{\hidewidth
  \lower1.5ex\hbox{`}\hidewidth\crcr\unhbox0}}}
\providecommand{\bysame}{\leavevmode\hbox to3em{\hrulefill}\thinspace}
\providecommand{\MR}{\relax\ifhmode\unskip\space\fi MR }
\providecommand{\MRhref}[2]{%
  \href{http://www.ams.org/mathscinet-getitem?mr=#1}{#2}
}
\providecommand{\href}[2]{#2}

\end{document}